\documentclass[11pt]{article}

\usepackage[margin=1in]{geometry}
\usepackage{amsmath,amssymb,amsthm}
\usepackage{mathtools}
\usepackage{enumitem}
\usepackage{hyperref}
\hypersetup{hidelinks}
\newtheorem{theorem}{Theorem}[section]
\newtheorem{corollary}[theorem]{Corollary}
\newtheorem{lemma}[theorem]{Lemma}
\newtheorem{proposition}[theorem]{Proposition}
\theoremstyle{definition}
\newtheorem{definition}[theorem]{Definition}
\newtheorem{remark}[theorem]{Remark}

\renewcommand{\P}{\mathbb{P}}
\DeclareMathOperator{\dist}{dist}
\DeclareMathOperator{\Cov}{Cov}
\DeclareMathOperator{\Var}{Var}

\title{Finite-sample Borel--Cantelli inequalities under mixing conditions}
\author{Chatchawan Panraksa\\
Applied Mathematics Program, Mahidol University International College\\
999 Phutthamonthon 4 Road, Salaya, Nakhon Pathom 73170, Thailand\\
\texttt{chatchawan.pan@mahidol.ac.th}}
\date{}

\begin{document}
\maketitle

\begin{abstract}
We prove explicit one-lag, finite-$N$ lower bounds for
$\P\bigl(\bigcup_{k=1}^{N}A_{k}\bigr)$ that use only the marginal
probabilities $\P(A_{k})$ and a single selected-lag dependence
coefficient of the event-generated $\sigma$-fields. A residue-class
blocking argument gives, under $\varphi$-mixing, a bound with a free
spacing parameter $L\ge 0$, spacing constant $1/(L+1)$, and residual
governed by $\varphi(L+1)$; a strong-mixing covariance argument gives an
$\alpha$-mixing analogue with an additive residual
$\lceil N/(L+1)\rceil\,\alpha(L+1)$. When the lag-$(L+1)$ coefficient
vanishes, both reduce to the finite-sample $m$-dependent bound of
Panraksa (2026), and the spacing constant $1/(L+1)$ is sharp in this
zero-residual sense. A second-order Bonferroni refinement and a worked
geometrically $\varphi$-mixing example are included. The estimates are
non-asymptotic one-lag tools, complementary to second-moment and
variance criteria rather than competitors to them.
\end{abstract}

\noindent\textbf{Keywords:} Borel--Cantelli lemma; mixing coefficients;
$\varphi$-mixing; $\alpha$-mixing; finite-sample inequalities.

\smallskip
\noindent\textbf{MSC 2020:} 60F15; 60G10; 60E15.

\section{Introduction}

The Borel--Cantelli lemma is a cornerstone of probability theory: for
events $(A_{k})_{k\ge1}$ on $(\Omega,\mathcal F,\P)$,
$\sum_{k}\P(A_{k})<\infty$ implies $\P(A_{k}\ \mathrm{i.o.})=0$, while
under independence $\sum_{k}\P(A_{k})=\infty$ implies
$\P(A_{k}\ \mathrm{i.o.})=1$ \cite{Durrett2010}. Because independence is
restrictive, a substantial literature relaxes it. Yoshihara
\cite{Yoshihara1979} and Tasche \cite{Tasche1997} obtained second
Borel--Cantelli conclusions under strong-mixing rates, and Dedecker,
Merlev\`ede and Rio \cite{DedeckerMerlevedeRio2022} developed
Borel--Cantelli and strong Borel--Cantelli criteria for weakly dependent
sequences with applications to Markov chains and dynamical systems.
These predecessors are asymptotic or variance-based.

For finite $N$ there is a complementary, non-asymptotic line. The
finite-sample $m$-dependent inequality of \cite{Panraksa2026} states
that, for an $m$-dependent family,
\begin{equation}\label{eq:mdep-baseline}
  \P\Bigl(\,\bigcup_{k=1}^{N}A_{k}\,\Bigr)
  \;\ge\; 1-\exp\!\Bigl(-\tfrac{1}{m+1}\,S_{N}\Bigr),
  \qquad S_{N}:=\sum_{k=1}^{N}\P(A_{k}),
\end{equation}
proved by splitting $\{1,\dots,N\}$ into $m+1$ residue classes modulo
$m+1$, applying independence within a class, and selecting the class of
maximal mass; Lu, Shi and Zhao \cite{LuShiZhao2026} recently obtained
related asymptotic refinements. In most time-series, stationary-process
and dynamical-systems applications, however, dependence \emph{decays}
with the gap rather than vanishing abruptly past a fixed range. It is
therefore natural to ask: to what extent does \eqref{eq:mdep-baseline}
persist when $m$-dependence is replaced by a quantitative mixing
condition?

This note answers that question. The main result
(Theorem~\ref{thm:phi}) shows that, for every integer $L\ge0$ and for
the uniform-mixing coefficient $\varphi$ of the event-generated
$\sigma$-fields,
\begin{equation}\label{eq:phi-intro}
  \P\Bigl(\,\bigcup_{k=1}^{N}A_{k}\,\Bigr)
  \;\ge\; 1-\exp\!\Biggl(-\frac{1}{L+1}
  \sum_{k=1}^{N}\bigl(\P(A_{k})-\varphi(L+1)\bigr)_{+}\Biggr),
\end{equation}
with a strong-mixing analogue (Theorem~\ref{thm:alpha}) carrying an
additive residual. The estimates are finite-$N$ and explicit: they use
only the marginals and the coefficient at the single chosen lag $L+1$,
with no limiting argument. Taking $L=m$, where $\varphi(m+1)=0$, recovers
\eqref{eq:mdep-baseline} exactly, and the spacing constant $1/(L+1)$ is
sharp in a precise zero-residual sense (Proposition~\ref{prop:sharp}).
A second-order Bonferroni refinement (Theorem~\ref{thm:second}) brings
local pairwise intersection probabilities into the exponent.

These bounds are one-lag tools. They are complementary to the
Chung--Erd\H{o}s second-moment bound \cite{ChungErdos1952}, which needs
all pairwise intersections, and to variance criteria using the full
mixing-rate profile \cite{ErdosRenyi1959,DedeckerMerlevedeRio2022}; they
are intended for the case in which only the marginals and a single
coefficient value are available. The present note is the mixing sequel
to the finite-sample $m$-dependent study \cite{Panraksa2026}.
Section~\ref{sec:prelim} fixes the mixing coefficients and three
elementary lemmas; Section~\ref{sec:main} proves the two inequalities
and the second-order refinement and establishes sharpness;
Section~\ref{sec:ex} gives a worked example and places the estimates in
context.

\section{Preliminaries}\label{sec:prelim}

Throughout, $(\Omega,\mathcal F,\P)$ is a probability space and
$(A_{k})$ are events. For integers $a,b\ge1$, put
$\mathcal F_{a}^{b}:=\sigma(A_{j}:a\le j\le b)$ and
$\mathcal F_{a}^{\infty}:=\sigma(A_{j}:j\ge a)$, with $\mathcal F_{a}^{b}$
trivial if $a>b$; we write $S_{N}:=\sum_{k=1}^{N}\P(A_{k})$,
$(x)_{+}:=\max(x,0)$, $\dist(I,J)=\inf\{|i-j|:i\in I,\,j\in J\}$, and
take an empty product to be $1$.

\begin{definition}[Mixing coefficients]\label{def:coeff}
For sub-$\sigma$-algebras $\mathcal A,\mathcal B\subseteq\mathcal F$,
\[
  \alpha(\mathcal A,\mathcal B):=\sup\bigl\{|\P(A\cap B)-\P(A)\P(B)|:
  A\in\mathcal A,\,B\in\mathcal B\bigr\},
\]
\[
  \varphi(\mathcal A,\mathcal B):=\sup\bigl\{|\P(B\mid A)-\P(B)|:
  A\in\mathcal A,\,\P(A)>0,\,B\in\mathcal B\bigr\}.
\]
For the event sequence, set the \emph{ambient} coefficients
$\alpha_{A}(n):=\sup_{k\ge1}\alpha(\mathcal F_{1}^{k},
\mathcal F_{k+n}^{\infty})$ and
$\varphi_{A}(n):=\sup_{k\ge1}\varphi(\mathcal F_{1}^{k},
\mathcal F_{k+n}^{\infty})$, and, for a fixed finite horizon $N$, the
\emph{finite restricted} coefficients
\[
  \alpha_{N}(n):=\!\!\sup_{1\le k\le N}\!\!\alpha(\mathcal F_{1}^{k},\mathcal F_{k+n}^{N}),
  \qquad
  \varphi_{N}(n):=\!\!\sup_{1\le k\le N}\!\!\varphi(\mathcal F_{1}^{k},\mathcal F_{k+n}^{N}),
\]
where $\mathcal F_{a}^{N}$ is understood to be trivial when $a>N$.
\end{definition}

Each finite statement below is read under exactly one convention --
ambient or finite restricted, not a mixture -- and we then write
$\alpha(n),\varphi(n)$ for the chosen reading. We use only two
elementary facts about these coefficients, both immediate from
Definition~\ref{def:coeff}: the monotonicity
$\alpha(\mathcal A_{0},\mathcal B_{0})\le\alpha(\mathcal A,\mathcal B)$
and $\varphi(\mathcal A_{0},\mathcal B_{0})\le\varphi(\mathcal A,\mathcal B)$
for $\mathcal A_{0}\subseteq\mathcal A$, $\mathcal B_{0}\subseteq\mathcal B$,
and the comparison $\alpha(\mathcal A,\mathcal B)\le
\varphi(\mathcal A,\mathcal B)$, valid because
$|\P(A\cap B)-\P(A)\P(B)|=\P(A)\,|\P(B\mid A)-\P(B)|$ when $\P(A)>0$
(so every $\varphi$-mixing sequence is $\alpha$-mixing); see
\cite{Bradley2005,Rio2017} for these standard normalisations. If the
$A_{k}$ are measurable with respect to an underlying process, the
process-level coefficients dominate the event-level ones by
monotonicity, so the hypotheses below may be verified at the process
level.

\begin{remark}[$m$-dependence as a limiting case]\label{rem:mdep}
A family is $m$-dependent if $\sigma(A_{i}:i\in I)$ and
$\sigma(A_{j}:j\in J)$ are independent whenever $\dist(I,J)>m$;
equivalently $\mathcal F_{1}^{k}$ and $\mathcal F_{k+n}^{\infty}$ are
independent for all $n>m$, so $\varphi(n)=\alpha(n)=0$ for $n>m$. The
mixing setting is thus a quantitative relaxation of $m$-dependence, and
every result below reduces to its $m$-dependent counterpart in
\cite{Panraksa2026} on substituting $\varphi(n)=0$ for $n>m$, where the
relevant parameter ranges match (the second-order recovery requires
$m\ge2$).
\end{remark}

We record three elementary tools.

\begin{lemma}[Product-to-exponential bound]\label{lem:prodexp}
For any integer $M\ge1$ and $x_{1},\dots,x_{M}\in[0,1]$,
$\prod_{j=1}^{M}(1-x_{j})\le\exp\bigl(-\sum_{j=1}^{M}x_{j}\bigr)$.
\end{lemma}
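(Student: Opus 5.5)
The plan is to reduce the product inequality to the scalar inequality $1-x\le e^{-x}$ for each factor and then multiply. First I establish $1-x\le e^{-x}$ for all real $x$. Set $g(x):=e^{-x}-(1-x)$. Then $g(0)=0$, and $g'(x)=1-e^{-x}$ is negative for $x<0$ and positive for $x>0$. So $g$ attains its global minimum $0$ at $x=0$, which gives $g\ge0$. Equivalently, this is the convexity of $x\mapsto e^{-x}$: its graph lies above the tangent line at the origin.

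Next I use the hypothesis $x_{j}\in[0,1]$. It guarantees $0\le 1-x_{j}\le e^{-x_{j}}$ for each $j$, so every factor on both sides is nonnegative. Multiplying finitely many inequalities $0\le a_{j}\le b_{j}$ preserves the inequality, $\prod_j a_j\le\prod_j b_j$. I will justify this by induction on $M$: from $0\le P\le Q$ and $0\le a\le b$ we get $Pa\le Qa\le Qb$. This yields
\[
\prod_{j=1}^{M}(1-x_{j})\;\le\;\prod_{j=1}^{M}e^{-x_{j}}\;=\;\exp\Bigl(-\sum_{j=1}^{M}x_{j}\Bigr).
\]

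The only point needing care is nonnegativity of the factors $1-x_{j}$. Without it the termwise product step fails: two negative factors could give a large positive product. The restriction $x_{j}\le1$ in the statement is exactly what rules this out, so that is where the hypothesis enters. Beyond this the argument is routine.
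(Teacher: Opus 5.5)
Your proof is correct and is essentially the paper's argument: both apply the scalar inequality $1-x\le e^{-x}$ factor by factor (the paper uses it in the form $\log(1-x)\le -x$). The only difference is cosmetic: the paper sums logarithms and so must treat the case $x_j=1$ separately, whereas your direct multiplication of the nonnegative inequalities $0\le 1-x_j\le e^{-x_j}$ covers that case automatically.
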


\begin{proof}
If some $x_{j}=1$ the product is zero and the claim is immediate;
otherwise $x_{j}\in[0,1)$ for every $j$, and applying $\log(1-x)\le-x$
term by term and exponentiating gives the result.
\end{proof}

\begin{lemma}[Approximate independence under $\varphi$-mixing]\label{lem:phiapprox}
Let $\mathcal A_{1},\dots,\mathcal A_{M}$ be sub-$\sigma$-algebras of
$\mathcal F$ satisfying
$\varphi\bigl(\sigma(\mathcal A_{1}\cup\cdots\cup\mathcal A_{r}),
\mathcal A_{r+1}\bigr)\le\varphi_{\star}$ for $1\le r\le M-1$. Let
$C_{j}\in\mathcal A_{j}$ and $u_{j}:=\P(C_{j})$. Then
\begin{equation}\label{eq:phiapprox}
  \P\Bigl(\,\bigcap_{j=1}^{M}C_{j}^{\mathrm c}\,\Bigr)
  \;\le\;\prod_{j=1}^{M}\min\bigl(1,\,1-u_{j}+\varphi_{\star}\bigr)
  \;\le\;\exp\!\Bigl(-\sum_{j=1}^{M}(u_{j}-\varphi_{\star})_{+}\Bigr).
\end{equation}
\end{lemma}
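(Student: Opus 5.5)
We prove Lemma~\ref{lem:phiapprox} by induction on the number of events, peeling off the last one with a single conditioning step. Set $D_{r}:=\bigcap_{j=1}^{r}C_{j}^{\mathrm c}$, so $D_{r}\in\sigma(\mathcal A_{1}\cup\cdots\cup\mathcal A_{r})$, and let $p_{r}:=\P(D_{r})$. The claim to establish is the recursion
\[
  p_{r+1}\;\le\;p_{r}\,\min\bigl(1,\,1-u_{r+1}+\varphi_{\star}\bigr),
  \qquad 1\le r\le M-1,
\]
together with the base case $p_{1}=1-u_{1}\le\min(1,1-u_{1}+\varphi_{\star})$. Iterating the recursion gives the first inequality in \eqref{eq:phiapprox}.

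For the recursion step there are two cases. If $p_{r}=0$ then $p_{r+1}\le p_{r}=0$ and there is nothing to prove. If $p_{r}>0$, write $p_{r+1}=p_{r}\,\P(C_{r+1}^{\mathrm c}\mid D_{r})$. Since $C_{r+1}\in\mathcal A_{r+1}$ and $D_{r}$ is in the past $\sigma$-field with positive probability, the definition of $\varphi$ and the hypothesis give
\[
  \P(C_{r+1}\mid D_{r})\;\ge\;u_{r+1}-\varphi_{\star},
\]
so $\P(C_{r+1}^{\mathrm c}\mid D_{r})\le 1-u_{r+1}+\varphi_{\star}$. The trivial bound by $1$ supplies the minimum. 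Note that only one direction of the absolute value in the definition of $\varphi$ is used, and it is used with the past event being the intersection $D_{r}$ rather than a single $C_{j}$. This is precisely why the hypothesis is imposed on the joined $\sigma$-field $\sigma(\mathcal A_{1}\cup\cdots\cup\mathcal A_{r})$.

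For the second inequality, observe that $\min(1,1-u_{j}+\varphi_{\star})=1-(u_{j}-\varphi_{\star})_{+}$. Setting $x_{j}:=(u_{j}-\varphi_{\star})_{+}$, we have $x_{j}\in[0,1]$, because $u_{j}\le1$ and $\varphi_{\star}\ge0$. Lemma~\ref{lem:prodexp} then applies directly.

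No step is a real obstacle. The only points needing care are the degenerate case $p_{r}=0$, where conditioning is undefined, and checking that each $x_{j}\in[0,1]$ so that Lemma~\ref{lem:prodexp} is applicable; both are handled above.
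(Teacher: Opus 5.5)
Your proof is correct and is essentially the paper's argument: the same conditioning recursion on $D_r$, with the case $\P(D_r)=0$ handled separately and a one-sided use of the $\varphi$ bound on the joined past $\sigma$-field. The only cosmetic difference is in the second inequality: you rewrite $\min(1,1-u_j+\varphi_\star)=1-(u_j-\varphi_\star)_+$ and invoke Lemma~\ref{lem:prodexp}, whereas the paper applies $1-x\le e^{-x}$ factor by factor, which amounts to the same thing.
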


\begin{proof}
We prove the left inequality by induction on $M$; the case $M=1$ is
$\P(C_{1}^{\mathrm c})=1-u_{1}\le\min(1,1-u_{1}+\varphi_{\star})$. Write
$D_{r}:=\bigcap_{j=1}^{r}C_{j}^{\mathrm c}\in
\sigma(\mathcal A_{1}\cup\cdots\cup\mathcal A_{r})$. If
$\P(D_{M-1})=0$ the bound is immediate; otherwise, by the definition of
$\varphi$,
$|\P(C_{M}^{\mathrm c}\mid D_{M-1})-\P(C_{M}^{\mathrm c})|\le
\varphi_{\star}$, so
$\P(C_{M}^{\mathrm c}\mid D_{M-1})\le\min(1,1-u_{M}+\varphi_{\star})$.
Multiplying by $\P(D_{M-1})$ and using the induction hypothesis gives the
product bound. For the exponential bound,
$\min(1,1-u+\varphi_{\star})\le\exp(-(u-\varphi_{\star})_{+})$: the
right side is $1$ if $u\le\varphi_{\star}$, while if $u>\varphi_{\star}$
then $1-u+\varphi_{\star}\le e^{-(u-\varphi_{\star})}$ by $1-x\le e^{-x}$.
\end{proof}

\begin{lemma}[Event-level strong-mixing bound]\label{lem:alpha}
If $\alpha(\mathcal A,\mathcal B)\le\alpha_{\star}$, then
$|\P(A\cap B)-\P(A)\P(B)|\le\alpha_{\star}$ for all $A\in\mathcal A$,
$B\in\mathcal B$; equivalently
$|\Cov(\mathbf 1_{A},\mathbf 1_{B})|\le\alpha_{\star}$.
\end{lemma}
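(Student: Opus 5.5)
This is immediate from Definition~\ref{def:coeff}, so the plan is a direct unwinding of the supremum rather than an argument with real steps. Fix $A\in\mathcal A$ and $B\in\mathcal B$. The quantity $|\P(A\cap B)-\P(A)\P(B)|$ is one of the numbers over which the supremum defining $\alpha(\mathcal A,\mathcal B)$ is taken. It is therefore bounded by $\alpha(\mathcal A,\mathcal B)$, and by hypothesis this is at most $\alpha_{\star}$.

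For the stated equivalence, I will compute the covariance of the indicators directly. Since $\mathbf 1_{A}\mathbf 1_{B}=\mathbf 1_{A\cap B}$, $\mathbb E\mathbf 1_{A}=\P(A)$ and $\mathbb E\mathbf 1_{B}=\P(B)$, we get
\[
\Cov(\mathbf 1_{A},\mathbf 1_{B})=\P(A\cap B)-\P(A)\P(B).
\]
Hence the two formulations say literally the same thing. No measurability subtleties arise, because $A$ and $B$ are themselves in $\mathcal A$ and $\mathcal B$.

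There is no genuine obstacle. The only point to state explicitly is that the bound is pointwise in the pair $(A,B)$, which is why Lemma~\ref{lem:alpha} can later be applied to events such as unions or complements drawn from the blocked $\sigma$-fields. It can also be combined with the monotonicity of $\alpha$ in its arguments, recorded after Definition~\ref{def:coeff}, when passing to smaller $\sigma$-fields.
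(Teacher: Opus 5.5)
Your proposal is correct and is the same argument as the paper's. You bound $|\P(A\cap B)-\P(A)\P(B)|$ by the defining supremum in Definition~\ref{def:coeff}, and you get the covariance form from the identity $\Cov(\mathbf 1_{A},\mathbf 1_{B})=\P(A\cap B)-\P(A)\P(B)$.
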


\begin{proof}
This is exactly the defining supremum in
Definition~\ref{def:coeff}; the covariance form follows by writing
\(\Cov(\mathbf 1_A,\mathbf 1_B)=\P(A\cap B)-\P(A)\P(B)\).
\end{proof}

\noindent\emph{Blocking construction.}
For $N\ge1$, $L\ge0$ and $r\in\{1,\dots,L+1\}$ let
\[
  J_{r}(L):=\{\,k\in\{1,\dots,N\}:k\equiv r\!\!\pmod{L+1}\,\}.
\]
The classes $J_{1}(L),\dots,J_{L+1}(L)$ partition $\{1,\dots,N\}$, each
consisting of indices at least $L+1$ apart, and
$\sum_{r=1}^{L+1}\sum_{k\in J_{r}(L)}w_{k}=\sum_{k=1}^{N}w_{k}$ for any
weights $w_{k}\ge0$, so
$\max_{r}\sum_{k\in J_{r}(L)}w_{k}\ge\frac{1}{L+1}\sum_{k=1}^{N}w_{k}$.
Enumerating a class as $k_{r,1}<k_{r,2}<\cdots$, the gaps are $\ge L+1$,
so the past block $\sigma(A_{k_{r,1}},\dots,A_{k_{r,j-1}})\subseteq
\mathcal F_{1}^{k_{r,j-1}}$ and the next event
$\sigma(A_{k_{r,j}})\subseteq\mathcal F_{k_{r,j-1}+L+1}^{\infty}$ (and
$\subseteq\mathcal F_{k_{r,j-1}+L+1}^{N}$ in the finite restricted
reading); by monotonicity the coefficient between them is at most
$\varphi(L+1)$ (respectively $\alpha(L+1)$). Taking $L=m$ in the
$m$-dependent case recovers the residue-class split of
\cite{Panraksa2026}.

\section{Main results}\label{sec:main}

\subsection{A finite-sample inequality under $\varphi$-mixing}

\begin{theorem}[One-lag $\varphi$-coefficient inequality]\label{thm:phi}
Let $N\ge1$ and let $(A_{k})_{1\le k\le N}$ be events, with
$\varphi(\cdot)$ fixed by one coefficient convention of
Section~\ref{sec:prelim}. For each integer $L\ge0$,
\begin{equation}\label{eq:phi-main}
  \P\Bigl(\,\bigcup_{k=1}^{N}A_{k}\,\Bigr)
  \;\ge\; 1-\exp\!\Biggl(-\frac{1}{L+1}
  \sum_{k=1}^{N}\bigl(\P(A_{k})-\varphi(L+1)\bigr)_{+}\Biggr).
\end{equation}
In particular, if $\varphi(L+1)\le\min_{1\le k\le N}\P(A_{k})$,
\begin{equation}\label{eq:phi-clean}
  \P\Bigl(\,\bigcup_{k=1}^{N}A_{k}\,\Bigr)
  \;\ge\; 1-\exp\!\Bigl(-\tfrac{1}{L+1}\bigl(S_{N}-N\varphi(L+1)\bigr)\Bigr).
\end{equation}
\end{theorem}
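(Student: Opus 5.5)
The plan is to combine the residue-class blocking construction of Section~\ref{sec:prelim} with Lemma~\ref{lem:phiapprox}, applied to the single residue class of largest positive excess. Fix $L\ge0$ and write $\varphi_{\star}:=\varphi(L+1)$. Take weights $w_{k}:=(\P(A_{k})-\varphi_{\star})_{+}\ge0$. By the averaging remark after the definition of $J_{r}(L)$, there is some $r\in\{1,\dots,L+1\}$ with
\[
\sum_{k\in J_{r}(L)}w_{k}\;\ge\;\frac{1}{L+1}\sum_{k=1}^{N}w_{k}.
\]
If this class is empty, the right-hand side of \eqref{eq:phi-main} is $0$ and there is nothing to prove. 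Otherwise I enumerate the class as $k_{1}<\dots<k_{M}$, and the main step is the monotone reduction
\[
\P\Bigl(\bigcup_{k=1}^{N}A_{k}\Bigr)\;\ge\;\P\Bigl(\bigcup_{j=1}^{M}A_{k_{j}}\Bigr)\;=\;1-\P\Bigl(\bigcap_{j=1}^{M}A_{k_{j}}^{\mathrm c}\Bigr).
\]

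Next I apply Lemma~\ref{lem:phiapprox} with $\mathcal A_{j}:=\sigma(A_{k_{j}})$ and $C_{j}:=A_{k_{j}}$, so that $u_{j}=\P(A_{k_{j}})$. Its hypothesis requires
\[
\varphi\bigl(\sigma(\mathcal A_{1}\cup\cdots\cup\mathcal A_{s}),\mathcal A_{s+1}\bigr)\le\varphi_{\star},\qquad 1\le s\le M-1.
\]
The blocking paragraph supplies exactly this. The past block lies in $\mathcal F_{1}^{k_{s}}$, and since $k_{s+1}\ge k_{s}+L+1$, the next event lies in $\mathcal F_{k_{s}+L+1}^{\infty}$, or in $\mathcal F_{k_{s}+L+1}^{N}$ under the finite restricted reading. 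Monotonicity of $\varphi$ in both arguments, together with the supremum over $k$ in the definition of $\varphi(n)$ (taking $k=k_{s}\le N$), gives the bound $\varphi(L+1)$ under whichever single convention has been fixed.

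The lemma then yields
\[
\P\Bigl(\bigcap_{j}A_{k_{j}}^{\mathrm c}\Bigr)\le\exp\Bigl(-\sum_{k\in J_{r}(L)}w_{k}\Bigr)\le\exp\Bigl(-\frac{1}{L+1}\sum_{k=1}^{N}w_{k}\Bigr),
\]
where the last step uses the choice of $r$ and the fact that $x\mapsto e^{-x}$ is decreasing. This proves \eqref{eq:phi-main}.

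For \eqref{eq:phi-clean}, the condition $\varphi_{\star}\le\min_{k}\P(A_{k})$ makes every $w_{k}=\P(A_{k})-\varphi_{\star}$, with no truncation. Then $\sum_{k}w_{k}=S_{N}-N\varphi_{\star}$, and the bound follows by substitution.

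I expect the only delicate point to be the bookkeeping in the hypothesis check. One must confirm that the $\sigma$-field generated by the earlier events in the class sits inside $\mathcal F_{1}^{k_{s}}$. One must also confirm that the gap of at least $L+1$ matches the lag index in the definition of $\varphi(n)$, namely $\mathcal F_{k+n}$ with $n=L+1$. Both are already recorded in the blocking construction, so the rest is routine.
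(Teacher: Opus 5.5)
Your proposal is correct and follows essentially the same argument as the paper. You select the residue class mod $L+1$ of maximal excess mass $\sum_{k\in J_r(L)}(\P(A_k)-\varphi(L+1))_+$, verify the hypothesis of Lemma~\ref{lem:phiapprox} through the blocking construction (past events as the conditioning first argument, the next event as the second), and pass from the class to the full union by monotonicity. Your handling of the empty-class case is equivalent to the paper's convention of ignoring empty classes.
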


\begin{proof}
Fix $L\ge0$ and recall the classes $J_{r}(L)$; empty classes are ignored
(their intersection is $\Omega$). For a non-empty class enumerate
$k_{r,1}<\cdots<k_{r,n_{r}}$ and set
$\mathcal A_{j}:=\sigma(A_{k_{r,j}})$. By the blocking construction the
hypothesis of Lemma~\ref{lem:phiapprox} holds with
$\varphi_{\star}=\varphi(L+1)$, so with $C_{j}=A_{k_{r,j}}$,
\[
  \P\Bigl(\,\bigcap_{k\in J_{r}(L)}A_{k}^{\mathrm c}\,\Bigr)
  \;\le\;\exp\!\Bigl(-\!\!\sum_{k\in J_{r}(L)}\!\!
  \bigl(\P(A_{k})-\varphi_{\star}\bigr)_{+}\Bigr).
\]
Since $\bigcap_{k=1}^{N}A_{k}^{\mathrm c}\subseteq
\bigcap_{k\in J_{r}(L)}A_{k}^{\mathrm c}$ for each $r$, and the classes
partition $\{1,\dots,N\}$,
\[
  \sum_{r=1}^{L+1}\sum_{k\in J_{r}(L)}\bigl(\P(A_{k})-\varphi_{\star}\bigr)_{+}
  =\sum_{k=1}^{N}\bigl(\P(A_{k})-\varphi_{\star}\bigr)_{+},
\]
so some class $r^{*}$ has
$\sum_{k\in J_{r^{*}}(L)}(\P(A_{k})-\varphi_{\star})_{+}\ge
\frac{1}{L+1}\sum_{k=1}^{N}(\P(A_{k})-\varphi_{\star})_{+}$. Using this
class,
\[
  \P\Bigl(\,\bigcap_{k=1}^{N}A_{k}^{\mathrm c}\,\Bigr)
  \;\le\;\exp\!\Biggl(-\frac{1}{L+1}
  \sum_{k=1}^{N}\bigl(\P(A_{k})-\varphi_{\star}\bigr)_{+}\Biggr),
\]
which is \eqref{eq:phi-main} after taking complements. If
$\varphi_{\star}\le\min_{k}\P(A_{k})$ then each summand equals
$\P(A_{k})-\varphi_{\star}$, giving \eqref{eq:phi-clean}.
\end{proof}

When $(A_{k})$ is $m$-dependent, $L=m$ gives $\varphi(m+1)=0$ and
\eqref{eq:phi-main} reduces to \eqref{eq:mdep-baseline}, i.e.\
\cite[Theorem~2]{Panraksa2026}. Since $L$ is free, it may be optimised
\emph{a posteriori}.

\begin{corollary}[Optimised exponent]\label{cor:opt}
Under the hypotheses of Theorem~\ref{thm:phi},
$\P\bigl(\bigcup_{k=1}^{N}A_{k}\bigr)\ge1-\exp(-\Psi)$, where
\[
  \Psi:=\sup_{L\in\mathbb Z_{\ge0}}\frac{1}{L+1}
  \sum_{k=1}^{N}\bigl(\P(A_{k})-\varphi(L+1)\bigr)_{+}.
\]
In the finite restricted reading the supremum may be taken over
$0\le L\le N-1$.
\end{corollary}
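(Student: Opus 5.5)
The plan is to read Corollary~\ref{cor:opt} as a pointwise consequence of Theorem~\ref{thm:phi}, followed by a passage to the supremum. For each integer $L\ge0$ write
\[
  \Psi_L:=\frac{1}{L+1}\sum_{k=1}^{N}\bigl(\P(A_k)-\varphi(L+1)\bigr)_+ .
\]
Theorem~\ref{thm:phi} gives $\P\bigl(\bigcap_{k=1}^N A_k^{\mathrm c}\bigr)\le e^{-\Psi_L}$ for every $L$. The coefficient convention is fixed once and for all, so the same reading applies to every $L$ and nothing is mixed. The left-hand side does not depend on $L$, so we can take the infimum over $L$ on the right.

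Each $\Psi_L$ lies in $[0,N]$, because every summand is at most $1$. Hence $\Psi=\sup_L\Psi_L$ is finite, and $\inf_L e^{-\Psi_L}=e^{-\Psi}$ by continuity and monotonicity of $x\mapsto e^{-x}$. Concretely, choose $L_n$ with $\Psi_{L_n}\uparrow\Psi$ and pass to the limit in $\P\bigl(\bigcap_k A_k^{\mathrm c}\bigr)\le e^{-\Psi_{L_n}}$. This gives $\P\bigl(\bigcap_k A_k^{\mathrm c}\bigr)\le e^{-\Psi}$, and taking complements yields the claim. No attainment of the supremum is needed.

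For the restriction to $0\le L\le N-1$ in the finite restricted reading, we show that every $L\ge N$ is dominated by $L=N-1$. If $L+1>N$, then $k+L+1>N$ for every $k\ge1$, so $\mathcal F_{k+L+1}^N$ is trivial and $\varphi_N(L+1)=0$. Therefore $\Psi_L=S_N/(L+1)\le S_N/N$. At $L=N-1$ the lag is $N$, and again $k+N>N$, so $\varphi_N(N)=0$ and $\Psi_{N-1}=S_N/N$. Thus $\sup_{L\ge0}\Psi_L=\max_{0\le L\le N-1}\Psi_L$.

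The only point needing care is the triviality check. With a trivial second $\sigma$-field the only events are $\emptyset$ and $\Omega$, so $|\P(B\mid A)-\P(B)|=0$ for each of them and the supremum in Definition~\ref{def:coeff} is $0$. Beyond this the argument is bookkeeping.
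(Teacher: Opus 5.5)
Your proposal is correct and follows the paper's argument. You apply Theorem~\ref{thm:phi} for each $L$ and pass to the supremum using monotonicity and continuity of $x\mapsto e^{-x}$; for $L\ge N$ in the finite restricted reading you use the triviality of $\mathcal F_{k+L+1}^{N}$ to get $\Psi_L=S_N/(L+1)\le S_N/N=\Psi_{N-1}$, and your explicit checks that $\varphi_N(N)=0$ and that the supremum need not be attained fill in details the paper leaves implicit.
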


\begin{proof}
Each $L$ gives \eqref{eq:phi-main}; as $x\mapsto1-e^{-x}$ is increasing
on $[0,\infty)$ and the exponents are nonnegative, take the supremum
over $L$. For $L\ge N$ the restricted coefficient at lag $L+1$ is $0$ and
the exponent $S_{N}/(L+1)$ does not exceed its value at $L=N-1$.
\end{proof}

\subsection{An $\alpha$-mixing analogue}

For $\alpha$-mixing sequences Lemma~\ref{lem:phiapprox} is unavailable
and is replaced by the covariance bound of Lemma~\ref{lem:alpha}, which
accumulates additively.

\begin{theorem}[One-lag $\alpha$-coefficient inequality]\label{thm:alpha}
Let $N\ge1$ and let $(A_{k})_{1\le k\le N}$ be events, with
$\alpha(\cdot)$ fixed by one coefficient convention of
Section~\ref{sec:prelim}. For each integer $L\ge0$,
\begin{equation}\label{eq:alpha-main}
  \P\Bigl(\,\bigcup_{k=1}^{N}A_{k}\,\Bigr)
  \;\ge\;\Bigl[\,1-\exp\!\bigl(-\tfrac{S_{N}}{L+1}\bigr)
  -\bigl\lceil\tfrac{N}{L+1}\bigr\rceil\alpha(L+1)\,\Bigr]_{+}.
\end{equation}
\end{theorem}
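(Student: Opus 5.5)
We work within a single residue class and compare the probability that no event in it occurs with the independent product, paying one $\alpha(L+1)$ per step. Fix $L\ge0$ and a non-empty class $J_{r}(L)$, enumerated as $k_{r,1}<\cdots<k_{r,n_{r}}$, so $n_{r}\le\lceil N/(L+1)\rceil$. Write $D_{j}:=\bigcap_{i\le j}A_{k_{r,i}}^{\mathrm c}$ with $D_{0}=\Omega$, and $p_{i}:=\P(A_{k_{r,i}})$. By the blocking construction, $D_{j-1}\in\mathcal F_{1}^{k_{r,j-1}}$ and $A_{k_{r,j}}^{\mathrm c}\in\mathcal F_{k_{r,j-1}+L+1}^{\infty}$ (or the restricted version), so Lemma~\ref{lem:alpha} together with monotonicity gives
\[
\bigl|\P(D_{j})-\P(D_{j-1})(1-p_{j})\bigr|\le\alpha(L+1).
\]

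Next we telescope. We set $\Pi_{j}:=\prod_{i\le j}(1-p_{i})$ and $E_{j}:=\P(D_{j})-\Pi_{j}$. Then
\[
E_{j}=\bigl[\P(D_{j})-\P(D_{j-1})(1-p_{j})\bigr]+(1-p_{j})E_{j-1}.
\]
Since $0\le1-p_{j}\le1$, induction gives $|E_{j}|\le|E_{j-1}|+\alpha(L+1)$. As $E_{1}=0$, we get $|E_{n_{r}}|\le(n_{r}-1)\alpha(L+1)\le\lceil N/(L+1)\rceil\alpha(L+1)$. Combining this with Lemma~\ref{lem:prodexp} yields
\[
\P\Bigl(\bigcap_{k\in J_{r}(L)}A_{k}^{\mathrm c}\Bigr)\le\exp\Bigl(-\sum_{k\in J_{r}(L)}\P(A_{k})\Bigr)+\Bigl\lceil\tfrac{N}{L+1}\Bigr\rceil\alpha(L+1).
\]

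Finally, we choose the class $r^{*}$ maximising $\sum_{k\in J_{r}(L)}\P(A_{k})$. This maximum is at least $S_{N}/(L+1)$, and since such a class has positive mass whenever $S_N>0$, it is non-empty (if $S_N=0$ the bound is trivial). We then use $\bigcap_{k=1}^{N}A_{k}^{\mathrm c}\subseteq\bigcap_{k\in J_{r^{*}}(L)}A_{k}^{\mathrm c}$, take complements, and note that the probability is nonnegative, which gives the positive part in \eqref{eq:alpha-main}.

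The main obstacle is the error propagation in the telescoping step. We must check that the factor $(1-p_{j})$ never amplifies the accumulated error, so that the errors add linearly rather than compound. We must also confirm that the lag-$(L+1)$ coefficient really controls each pair $(D_{j-1},A_{k_{r,j}}^{\mathrm c})$, which relies on the gap $k_{r,j}-k_{r,j-1}\ge L+1$ and on monotonicity under both coefficient conventions.
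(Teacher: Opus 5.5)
Your proposal is correct and follows essentially the same route as the paper: residue-class blocking, a per-step bound $|\P(D_{j})-\P(D_{j-1})(1-p_{j})|\le\alpha(L+1)$ from Lemma~\ref{lem:alpha}, linear accumulation of at most $(n_{r}-1)\alpha(L+1)$ because the factor $1-p_{j}$ lies in $[0,1]$, then Lemma~\ref{lem:prodexp} and selection of a class of maximal mass. The differences are cosmetic: you track the two-sided error $E_{j}$ where the paper uses the one-sided recursion $q_{j}\le(1-p_{r,j})q_{j-1}+\alpha(L+1)$, and you handle the case $S_{N}=0$ explicitly.
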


\begin{proof}
Fix $L\ge0$ and the classes $J_{r}(L)$; ignore empty classes. For a
non-empty class enumerate $k_{r,1}<\cdots<k_{r,n_{r}}$, write
$p_{r,j}:=\P(A_{k_{r,j}})$, $q_{0}:=1$, and
$q_{j}:=\P\bigl(\bigcap_{i\le j}A_{k_{r,i}}^{\mathrm c}\bigr)$. For
$j\ge2$, with $D_{j-1}:=\bigcap_{i<j}A_{k_{r,i}}^{\mathrm c}\in
\mathcal F_{1}^{k_{r,j-1}}$ and
$A_{k_{r,j}}^{\mathrm c}\in\mathcal F_{k_{r,j-1}+L+1}^{\infty}$, the
blocking construction and Lemma~\ref{lem:alpha} give
$|\P(D_{j-1}\cap A_{k_{r,j}}^{\mathrm c})-\P(D_{j-1})\P(A_{k_{r,j}}^{\mathrm c})|
\le\alpha(L+1)=:\alpha_{\star}$, hence
$q_{j}\le(1-p_{r,j})q_{j-1}+\alpha_{\star}$. Induction on $j$ yields
\[
  q_{n_{r}}\;\le\;\prod_{j=1}^{n_{r}}(1-p_{r,j})
  +\alpha_{\star}\sum_{j=2}^{n_{r}}\prod_{i=j+1}^{n_{r}}(1-p_{r,i})
  \;\le\;\exp\!\Bigl(-\!\!\sum_{k\in J_{r}(L)}\!\!\P(A_{k})\Bigr)
  +(n_{r}-1)_{+}\alpha_{\star},
\]
using Lemma~\ref{lem:prodexp} and bounding each inner product by $1$.
Choosing a class with $\sum_{k\in J_{r^{*}}(L)}\P(A_{k})\ge S_{N}/(L+1)$
and $n_{r^{*}}\le\lceil N/(L+1)\rceil$, then taking complements and using
nonnegativity of the left side gives \eqref{eq:alpha-main}.
\end{proof}

The bound \eqref{eq:alpha-main} is informative when
$\lceil N/(L+1)\rceil\alpha(L+1)$ is small relative to
$1-\exp(-S_{N}/(L+1))$.

\subsection{A second-order Bonferroni refinement}

When local pairwise overlaps are available, a shifted-block argument
brings them into the exponent. Set
$T_{L-1}:=\sum_{1\le i<j\le N,\ |i-j|\le L-1}\P(A_{i}\cap A_{j})$.

\begin{theorem}[Second-order one-lag $\varphi$-coefficient inequality]\label{thm:second}
Under the hypotheses of Theorem~\ref{thm:phi} with $L\ge2$,
\begin{equation}\label{eq:second}
  \P\Bigl(\,\bigcup_{k=1}^{N}A_{k}\,\Bigr)
  \;\ge\; 1-\exp\!\Bigl(-\tfrac12\bigl(S_{N}-T_{L-1}
  -\kappa_{N,L}\,\varphi(L+1)\bigr)_{+}\Bigr),
  \qquad \kappa_{N,L}:=\bigl\lceil\tfrac{N}{L}\bigr\rceil+1.
\end{equation}
\end{theorem}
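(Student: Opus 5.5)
We plan to use a shifted-block argument. We partition $\{1,\dots,N\}$ into consecutive blocks of length $L-1$, separated by single gap indices. The blocks are separated by one index, so consecutive blocks are at distance $2$, which is not enough. Instead we take blocks of length $L-1$ separated by gaps of length $1$, and group them into two interleaved families (odd-numbered blocks and even-numbered blocks). Within one family, consecutive blocks are at distance at least $(L-1)+1+1 = L+1$.

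Concretely, for a shift $s\in\{0,1,\dots,L-1\}$ we cut $\{1,\dots,N\}$ into consecutive windows $W_1,W_2,\dots$ of length $L$. We split each window into a block $B_i$ of its first $L-1$ indices and a discarded index. We then use the odd blocks $B_1,B_3,\dots$ or the even blocks $B_2,B_4,\dots$. In a family, distinct blocks are separated by at least one full window plus one index, hence by at least $L+1$. Set $E_i:=\bigcup_{k\in B_i}A_k\in\sigma(A_k:k\in B_i)$. By Bonferroni, $\P(E_i)\ge u_i:=\sum_{k\in B_i}\P(A_k)-\sum_{j<k\in B_i}\P(A_j\cap A_k)$, and every pair in $B_i$ has $|j-k|\le L-2\le L-1$.

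We apply Lemma~\ref{lem:phiapprox} to the blocks of a family, with $\mathcal A_i=\sigma(A_k:k\in B_i)$. The past blocks lie in $\mathcal F_1^{\max B_{i-2}}$ and the next block lies in $\mathcal F^{\infty}_{\max B_{i-2}+L+1}$ (or $\mathcal F^N$), so by monotonicity $\varphi_\star=\varphi(L+1)$. This gives
\[
\P\Bigl(\bigcap_k A_k^{\mathrm c}\Bigr)\le\exp\Bigl(-\sum_{i\in\text{family}}(u_i-\varphi_\star)_+\Bigr)\le\exp\Bigl(-\sum_{i\in\text{family}}u_i+n_{\text{fam}}\varphi_\star\Bigr),
\]
using $(x)_+\ge x$ and monotonicity of $\P(E_i)$ in $u_i$.

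Next we average. Summing over the two families (odd and even) and over the $L$ shifts $s$, each index $k$ lies in a block for exactly $L-1$ of the $L$ shifts, since it is discarded exactly once. Each pair $(j,k)$ with $|j-k|\le L-1$ appears in a common block for at most $L-1$ shifts, or fewer. Pairs appearing fewer times only help, since each pair term enters $u_i$ with a negative sign, so the pair total is bounded by $(L-1)T_{L-1}$. Hence $\sum_{s}\sum_{\text{fam}}\sum_i u_i\ge(L-1)(S_N-T_{L-1})$, and there are $2L$ (shift, family) pairs.

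The main obstacle is the constant. Plain averaging gives the factor $(L-1)/(2L)$, not $\tfrac12$. To get $\tfrac12$ we would instead choose the block length so that every index is covered in a single partition and no index is discarded. We would use blocks of length $L$ with no gaps, split into the odd and even families. Blocks in the same family are then separated by a full block, giving distance $\ge L+1$. The pairs inside a block then satisfy $|i-j|\le L-1$, which matches $T_{L-1}$ exactly. Then $\sum_{\text{odd}}u_i+\sum_{\text{even}}u_i\ge S_N-T_{L-1}$, so one family has total at least $\tfrac12(S_N-T_{L-1})$. That family has at most $\lceil N/L\rceil$ blocks, so its residual term is at most $\lceil N/L\rceil\varphi_\star\le\kappa_{N,L}\varphi_\star$; we apply the bound to the better family, noting that $\tfrac12$ of each family's count is crude but within $\kappa_{N,L}$. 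The remaining care is the $(\cdot)_+$ outside: if $S_N-T_{L-1}-\kappa\varphi_\star<0$ the bound is trivial. The hypothesis $L\ge2$ is needed so that the blocks are nontrivial and the second-order Bonferroni term is meaningful.
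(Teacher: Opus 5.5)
Your final construction (one unshifted partition into consecutive length-$L$ blocks, split by parity) is sound, but the residual bookkeeping as written does not give the theorem.

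\textbf{The residual count.} You bound the number $n_F$ of blocks in the chosen parity family by $\lceil N/L\rceil$. What that yields is
\[
  \P\Bigl(\bigcap_{k=1}^{N}A_k^{\mathrm c}\Bigr)
  \le\exp\Bigl(-\tfrac12\bigl(S_N-T_{L-1}\bigr)+\lceil N/L\rceil\,\varphi(L+1)\Bigr).
\]
The theorem instead needs $+\tfrac12\kappa_{N,L}\varphi(L+1)=\tfrac12\bigl(\lceil N/L\rceil+1\bigr)\varphi(L+1)$ in the exponent. For $\lceil N/L\rceil\ge2$ your residual is strictly larger, and asymptotically twice as large.

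Your remark that the count is ``crude but within $\kappa_{N,L}$'' does not close this. What is needed is $n_F\le\kappa_{N,L}/2$, not $n_F\le\kappa_{N,L}$.

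The repair is one line. With $\mathcal N=\lceil N/L\rceil$ blocks in your partition, either parity family has at most $\lceil\mathcal N/2\rceil\le(\mathcal N+1)/2=\kappa_{N,L}/2$ blocks. That gives exactly the stated exponent.

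Alternatively, do as the paper does: apply Lemma~\ref{lem:phiapprox} to both families and combine via $\min\{e^{-x},e^{-y}\}\le e^{-(x+y)/2}$, with $x,y$ the two family sums of $u_j-\varphi(L+1)$. This charges each block once with weight $\tfrac12$. For your partition it even gives residual $\lceil N/L\rceil\varphi(L+1)$ in place of $\kappa_{N,L}\varphi(L+1)$.

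\textbf{Comparison with the paper.} Once the count is fixed, your route is correct and slightly simpler than the paper's. The paper forms all $L$ shifted partitions and averages the Bonferroni bound over shifts, using that a pair at gap $d$ shares a block for exactly $(L-d)_+$ shifts. It then minimises over the shift. You instead observe that in a single partition every within-block pair has gap at most $L-1$ and is counted once, so $\sum_j u_j\ge S_N-T_{L-1}$ directly.

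The shift-averaging would only pay off for the sharper weighted overlap $\frac1L\sum_{i<k}(L-(k-i))_+\P(A_i\cap A_k)$, which the stated theorem relaxes to $T_{L-1}$ anyway. The paper's $+1$ in $\kappa_{N,L}$ comes from the initial partial block $\{1,\dots,r\}$ created by nonzero shifts, which your unshifted partition avoids.

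The separation argument, the application of Lemma~\ref{lem:phiapprox} with $\varphi_\star=\varphi(L+1)$, and the step $(\P(E_i)-\varphi_\star)_+\ge u_i-\varphi_\star$ are all fine. The initial gapped construction can simply be dropped.
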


\begin{proof}
For each shift $r\in\{0,1,\dots,L-1\}$, partition $\{1,\dots,N\}$ into
consecutive length-$L$ blocks
$I_{j}^{(r)}:=\{r+(j-1)L+1,\dots,r+jL\}\cap\{1,\dots,N\}$ ($j\ge0$), and
set $B_{j}^{(r)}:=\bigcup_{k\in I_{j}^{(r)}}A_{k}$; empty blocks are
ignored. Consecutive same-parity blocks $I_{j}^{(r)},I_{j+2}^{(r)}$ are
separated by at least $L+1$ indices, and truncation to $\{1,\dots,N\}$
does not decrease this separation. Hence, ordering the non-empty even-
and the non-empty odd-indexed block-union events, the past of each lies
in some $\mathcal F_{1}^{t}$ and the next same-parity block in
$\mathcal F_{t+L+1}^{\infty}$ (or in $\mathcal F_{t+L+1}^{N}$ in the
finite restricted reading); by the blocking construction the one-sided
coefficient is at most $\varphi(L+1)$. Let $\mathcal N_{r}$ be the number
of non-empty blocks, so $\mathcal N_{r}\le\lceil N/L\rceil+1=\kappa_{N,L}$.

\emph{Parity split.} Applying Lemma~\ref{lem:phiapprox} to the even- and
to the odd-indexed block-union events and combining the two bounds via
$\min\{e^{-x},e^{-y}\}\le e^{-(x+y)/2}$ gives, for every $r$,
\begin{equation}\label{eq:parity}
  \P\Bigl(\,\bigcap_{k=1}^{N}A_{k}^{\mathrm c}\,\Bigr)
  \;\le\;\exp\!\Bigl(-\tfrac12\sum_{j\ge0}
  \bigl(\P(B_{j}^{(r)})-\varphi(L+1)\bigr)_{+}\Bigr).
\end{equation}

\emph{Bonferroni and pair counting.} The second-order Bonferroni
inequality gives
$\P(B_{j}^{(r)})\ge\sum_{k\in I_{j}^{(r)}}\P(A_{k})-
\sum_{i<k:\,i,k\in I_{j}^{(r)}}\P(A_{i}\cap A_{k})$. A pair $i<k$ with
$d:=k-i$ lies in a common length-$L$ block for exactly $(L-d)_{+}$ of the
$L$ shifts: if $d<L$, the $d$ internal boundaries between $i$ and
$k$ have distinct residues modulo $L$; if $d\ge L$, every residue occurs
among those boundary residues, so no shift places the pair in a common
block. Summing over $j$ and averaging over $r$,
\begin{equation}\label{eq:blocklower}
  \frac1L\sum_{r=0}^{L-1}\sum_{j\ge0}\P(B_{j}^{(r)})
  \;\ge\; S_{N}-T_{L-1}.
\end{equation}

\emph{Positive part and selection.} Using $(a-c)_{+}\ge a-c$ and
$\mathcal N_{r}\le\kappa_{N,L}$, and then applying \eqref{eq:blocklower},
\[
  \frac1L\sum_{r=0}^{L-1}\sum_{j\ge0}
  \bigl(\P(B_{j}^{(r)})-\varphi(L+1)\bigr)_{+}
  \;\ge\;(S_{N}-T_{L-1})-\kappa_{N,L}\,\varphi(L+1).
\]
The union $\bigcup_{k}A_{k}$ is the same for every shift, so the bound
\eqref{eq:parity} may be minimised over $r$; with
$x_{r}:=\sum_{j\ge0}(\P(B_{j}^{(r)})-\varphi(L+1))_{+}\ge0$, one has
$\min_{r}e^{-x_{r}/2}\le\exp(-\tfrac1{2L}\sum_{r}x_{r})$. Combining the
last three displays and inserting the positive part (legitimate since the
left side is non-negative) yields \eqref{eq:second}.
\end{proof}

For $m\ge2$, taking $L=m$ in the $m$-dependent case gives
$\varphi(m+1)=0$, so \eqref{eq:second} reduces to the positive-part
exponent $\tfrac12(S_{N}-T_{m-1})_{+}$, the form of
\cite[Theorem~4]{Panraksa2026}.

\subsection{Sharpness of the spacing constant}

The spacing constant inherited from the $m$-dependent case cannot be
improved uniformly over the mixing classes, because those classes
contain finite-range dependent examples with vanishing coefficient at
the relevant lag.

\begin{proposition}[Sharpness of the $m$-dependent constant]\label{prop:sharp}
For every integer $m\ge0$, let $c_{m}^{*}$ be the supremum of all $c>0$
such that $\P\bigl(\bigcup_{k=1}^{N}A_{k}\bigr)\ge1-\exp(-c\,S_{N})$ for
every $N$, every probability space, and every finite $m$-dependent
family. Then $c_{m}^{*}=1/(m+1)$.
\end{proposition}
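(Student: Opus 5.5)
The proof has two directions: a lower bound $c_m^*\ge 1/(m+1)$ and an upper bound $c_m^*\le 1/(m+1)$.

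\emph{Lower bound.} Theorem~\ref{thm:phi} with $L=m$ applies to any finite $m$-dependent family. By Remark~\ref{rem:mdep}, $\varphi(m+1)=0$ under either coefficient convention, so \eqref{eq:phi-main} reads $\P(\bigcup_k A_k)\ge 1-\exp(-S_N/(m+1))$. Hence $c=1/(m+1)$ is admissible, and $c_m^*\ge 1/(m+1)$.

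\emph{Upper bound.} We exhibit $m$-dependent families for which any admissible $c$ must satisfy $c\le 1/(m+1)$. The natural candidate is a single event repeated in consecutive blocks of length $m+1$:
\begin{itemize}
\item Take an i.i.d.\ sequence of events $E_1,E_2,\dots$ with $\P(E_i)=p$.
\item Put $A_k:=E_{\lceil k/(m+1)\rceil}$, so $A_k$ is constant on the blocks $\{(i-1)(m+1)+1,\dots,i(m+1)\}$.
\end{itemize}

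Now check $m$-dependence. If $\dist(I,J)>m$, then no block of length $m+1$ can meet both $I$ and $J$. So $\sigma(A_i:i\in I)$ and $\sigma(A_j:j\in J)$ are generated by disjoint subfamilies of the independent $E$'s, and are therefore independent.

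With $N=n(m+1)$ we get $S_N=n(m+1)p$ and $\P(\bigcup_k A_k)=1-(1-p)^n$. An admissible $c$ must therefore satisfy
\[
(1-p)^n\le \exp(-c\,n(m+1)p)\quad\text{for all } n\ge1,\ p\in(0,1).
\]
Taking $n=1$ suffices: the condition becomes $-\log(1-p)\ge c(m+1)p$. Divide by $p$ and let $p\downarrow0$; since $-\log(1-p)/p\to1$, this forces $c\le 1/(m+1)$. So $c_m^*\le 1/(m+1)$.

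\emph{Main obstacle.} The only delicate point is verifying $m$-dependence under the paper's definition, i.e.\ for arbitrary index sets $I,J$ and not only for past/future splits. The block-disjointness argument above handles general $I,J$ directly: an index set at distance greater than $m$ from another can never share a block with it. If one wished to avoid repeating events, a variant with an alternative dependence structure could be used instead, but the repeated-event construction already gives equality in the limit $p\to0$.
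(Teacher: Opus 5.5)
Your proposal is correct and follows the paper's proof essentially verbatim: the lower bound comes from Theorem~\ref{thm:phi} at $L=m$ with $\varphi(m+1)=0$, and the upper bound from repeating independent events of probability $p$ on consecutive blocks of length $m+1$, with $m$-dependence coming from block-disjointness, followed by $p\downarrow0$ in $-\log(1-p)\ge c(m+1)p$. Your specialisation to $n=1$ is harmless, since the number of blocks cancels after taking $n$-th roots, exactly as $q$ drops out in the paper.
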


\begin{proof}
$c_{m}^{*}\ge1/(m+1)$ follows from Theorem~\ref{thm:phi} at $L=m$, since
$\varphi(m+1)=0$ (Remark~\ref{rem:mdep}). For the reverse bound, fix
$p\in(0,1)$ and a positive integer $q$, let $B_{1},\dots,B_{q}$ be
independent with $\P(B_{j})=p$, and set
$A_{(j-1)(m+1)+r}:=B_{j}$ for $r=1,\dots,m+1$, $N:=(m+1)q$. Index sets
separated by more than $m$ meet disjoint blocks, hence depend on disjoint
subfamilies of $B_{1},\dots,B_{q}$, so $(A_{k})$ is $m$-dependent. Here
$S_{N}=(m+1)qp$ and
$\P(\bigcup_{k}A_{k})=\P(\bigcup_{j}B_{j})=1-(1-p)^{q}$. Admissibility of
$c$ forces $1-(1-p)^{q}\ge1-\exp(-c(m+1)qp)$, i.e.\
$-\log(1-p)\ge c(m+1)p$ for all $p\in(0,1)$. Since
$\inf_{0<p<1}\frac{-\log(1-p)}{p}=\lim_{p\downarrow0}\frac{-\log(1-p)}{p}=1$,
this gives $c\le1/(m+1)$.
\end{proof}

\begin{corollary}[Sharpness under mixing]\label{cor:sharpmix}
For every integer $L\ge0$, the constant $1/(L+1)$ in the exponent of
\eqref{eq:phi-main} and in that of \eqref{eq:alpha-main} cannot be
replaced by any strictly larger universal spacing constant in a bound of
the same zero-residual form -- even restricted to the subclass of
$\varphi$-mixing (respectively $\alpha$-mixing) sequences with
$\varphi(L+1)=0$ (respectively $\alpha(L+1)=0$).
\end{corollary}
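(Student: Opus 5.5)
The plan is to transfer the extremal construction from the proof of Proposition~\ref{prop:sharp} with $m:=L$. Fix $L\ge0$, $p\in(0,1)$ and $q\ge1$. Let $B_{1},\dots,B_{q}$ be independent with $\P(B_{j})=p$, and set $A_{(j-1)(L+1)+r}:=B_{j}$ for $r=1,\dots,L+1$, with $N:=(L+1)q$. As shown there, this family is $L$-dependent, so by Remark~\ref{rem:mdep} both $\varphi(L+1)=0$ and $\alpha(L+1)=0$.

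First I would check these vanishing claims under either coefficient convention. For every $k$, the $\sigma$-field $\mathcal F_{1}^{k}$ is generated by blocks $B_{1},\dots,B_{j}$, where $j$ is the block containing $k$. The $\sigma$-field $\mathcal F_{k+L+1}^{\infty}$, or its restricted version $\mathcal F_{k+L+1}^{N}$, involves only $B_{j+1},\dots$, because an index gap of $L+1$ crosses a full block boundary. By independence of the $B$'s, both coefficients at lag $L+1$ therefore vanish. Here $\mathcal F_{a}^{\infty}$ is read with $A_{k}$ for $k>N$ taken trivial, or the family is extended by independent null events. This places the example in the stated subclass.

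Next I would argue by contradiction. Suppose $c>1/(L+1)$ were a universal spacing constant in a zero-residual bound. On this subclass the residual terms vanish: $(\P(A_{k})-\varphi(L+1))_{+}=\P(A_{k})$ in \eqref{eq:phi-main}, and $\lceil N/(L+1)\rceil\alpha(L+1)=0$ in \eqref{eq:alpha-main}. The improved bound would then read $\P(\bigcup_{k}A_{k})\ge1-\exp(-cS_{N})$ for every such family. For the construction above, $S_{N}=(L+1)qp$ and $\P(\bigcup_{k}A_{k})=1-(1-p)^{q}$, so the bound forces $-\log(1-p)\ge c(L+1)p$ for all $p\in(0,1)$. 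Letting $p\downarrow0$ gives $c\le1/(L+1)$, which is the desired contradiction.

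The only delicate point is making precise what ``bound of the same zero-residual form'' means, namely that the residual disappears exactly when the lag-$(L+1)$ coefficient is $0$. Once that is fixed, the proof is essentially Proposition~\ref{prop:sharp} at $m=L$, together with the observation that the construction is $\varphi$- and $\alpha$-mixing with zero coefficient at lag $L+1$.
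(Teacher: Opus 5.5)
Your proposal is correct and follows the paper's proof: both specialise Theorems~\ref{thm:phi} and~\ref{thm:alpha} to the zero-coefficient subclass, reuse the repeated-block construction of Proposition~\ref{prop:sharp} with $m=L$, and let $p\downarrow0$. The only difference is in the ambient convention, where the paper extends by further independent blocks of length $L+1$ and you pad with null events; either extension keeps $\varphi(L+1)=\alpha(L+1)=0$, and your direct check that a gap of $L+1$ always crosses a block boundary makes this explicit.
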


\begin{proof}
Specialising Theorems~\ref{thm:phi} and~\ref{thm:alpha} to sequences for
which the coefficient vanishes at lag $L+1$ gives
$\P(\bigcup_{k}A_{k})\ge1-\exp(-S_{N}/(L+1))$. Under the finite restricted
convention the families of Proposition~\ref{prop:sharp} (with $m=L$)
realise this class directly; for the ambient convention, embed that
finite block construction into an infinite sequence of independent blocks
of length $L+1$, which is $L$-dependent and hence has
$\varphi(L+1)=\alpha(L+1)=0$. Thus the constant cannot be enlarged.
\end{proof}

This sharpness is a zero-residual statement: it does not assert
optimality of the residual penalties $\varphi(L+1)$, $\alpha(L+1)$. For
genuinely mixing sequences with positive residual at every lag,
Theorem~\ref{thm:phi} presents a trade-off -- decreasing $L$ improves
the prefactor $1/(L+1)$ but increases $\varphi(L+1)$ -- captured by the
quantity $\Psi$ of Corollary~\ref{cor:opt}.

\section{An example and comparisons}\label{sec:ex}

\noindent\emph{Recovery of the $m$-dependent case.}
If $(A_{k})$ is $m$-dependent then $\varphi(n)=0$ for $n>m$, and $L=m$
turns \eqref{eq:phi-main} into \eqref{eq:mdep-baseline}, recovering
\cite[Theorem~2]{Panraksa2026}; for $m\ge2$ it turns \eqref{eq:second}
into the positive-part bound $1-\exp(-\tfrac12(S_{N}-T_{m-1})_{+})$ of
\cite[Theorem~4]{Panraksa2026}. Thus no information is lost as
$\varphi(L+1)\downarrow0$.

\medskip
\noindent\emph{Geometrically $\varphi$-mixing sequences.}
Suppose $\varphi(n)\le C\rho^{n}$ for constants $C\ge1$, $\rho\in(0,1)$;
this holds, for instance, under a Doeblin minorisation condition, and in
the finite-state case whenever the chain is irreducible and aperiodic
\cite{Doukhan1994,Bradley2005}.

\begin{proposition}[Geometric $\varphi$-mixing with positive lower mass]\label{prop:geom}
Suppose $\varphi(n)\le C\rho^{n}$ and $p:=\inf_{k}\P(A_{k})>0$. Choose an
integer $L_{0}\ge0$ with $C\rho^{L_{0}+1}\le p/2$. Then for every
$N\ge1$,
\[
  \P\Bigl(\,\bigcup_{k=1}^{N}A_{k}\,\Bigr)
  \;\ge\; 1-\exp\!\Bigl(-\frac{S_{N}}{2(L_{0}+1)}\Bigr)
  \;\ge\; 1-\exp\!\Bigl(-\frac{pN}{2(L_{0}+1)}\Bigr).
\]
\end{proposition}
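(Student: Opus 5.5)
The plan is to apply Theorem~\ref{thm:phi} with the particular spacing $L=L_{0}$ and then control the residual using the lower mass $p$. Two checks are needed first. The existence of $L_{0}$ is automatic because $C\rho^{n}\to0$ as $n\to\infty$ and $p/2>0$. The coefficient then satisfies $\varphi(L_{0}+1)\le C\rho^{L_{0}+1}\le p/2$, under whichever convention the geometric bound is assumed; the restricted coefficients are dominated by the ambient ones by monotonicity, so either reading works.

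The key step is a termwise comparison inside the positive part. For every $k\le N$ we have $\P(A_{k})\ge p\ge 2\varphi(L_{0}+1)$, so $\varphi(L_{0}+1)\le\tfrac12\P(A_{k})$. Hence
\[
\bigl(\P(A_{k})-\varphi(L_{0}+1)\bigr)_{+}\ge \P(A_{k})-\tfrac12\P(A_{k})=\tfrac12\P(A_{k}).
\]
Summing over $k$ gives a lower bound of $\tfrac12 S_{N}$ for the sum in the exponent of \eqref{eq:phi-main}. Since $x\mapsto 1-e^{-x}$ is increasing, \eqref{eq:phi-main} at $L=L_{0}$ yields the first inequality. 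This uses $\P(A_{k})$ termwise rather than only $p$, which is what gives $S_{N}$ and not merely $pN$ in the first bound.

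The second inequality follows from $S_{N}\ge pN$, which holds because every $\P(A_{k})\ge p$, together with the same monotonicity of $1-e^{-x}$.

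There is no real obstacle here. The only point needing care is the termwise halving: using the crude bound $S_{N}-N\varphi$ from \eqref{eq:phi-clean} would give $S_{N}-Np/2$. That quantity is still at least $S_{N}/2$ only because $Np\le S_{N}$, so either route works, but the termwise version is cleaner.
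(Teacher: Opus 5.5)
Your proof is correct and is essentially the paper's argument: apply Theorem~\ref{thm:phi} at $L=L_{0}$, use $\varphi(L_{0}+1)\le p/2$ to absorb the residual into half of $S_{N}$, and finish with $S_{N}\ge pN$. The only cosmetic difference is in how the residual is absorbed. You halve termwise inside \eqref{eq:phi-main}, while the paper goes through \eqref{eq:phi-clean} with $N\varphi(L_{0}+1)\le Np/2\le S_{N}/2$, which is exactly the alternative route you mention yourself.
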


\begin{proof}
By the choice of $L_{0}$,
$\varphi(L_{0}+1)\le p/2\le\min_{k}\P(A_{k})$, so \eqref{eq:phi-clean}
with $L=L_{0}$ gives
$\P(\bigcup_{k}A_{k})\ge1-\exp\bigl(-(S_{N}-N\varphi(L_{0}+1))/(L_{0}+1)\bigr)$.
Since $N\varphi(L_{0}+1)\le Np/2\le S_{N}/2$, the first bound follows,
and the second from $S_{N}\ge pN$.
\end{proof}

For a concrete instance, let $(X_{k})$ be the stationary Markov chain on
$\{0,1\}$ with transition matrix
$P=\begin{pmatrix}1-a & a\\ b & 1-b\end{pmatrix}$,
$a,b\in(0,1)$, and let $A_{k}=\{X_{k}=1\}$, so $\P(A_{k})=a/(a+b)$ and the
event coefficients satisfy $\varphi_{A}(n)\le|1-a-b|^{n}$: indeed, with
$\pi_{1}=a/(a+b)$ and $\pi_{0}=b/(a+b)$,
$P^{n}(0,1)-\pi_{1}=-\pi_{1}(1-a-b)^{n}$ and
$P^{n}(1,1)-\pi_{1}=\pi_{0}(1-a-b)^{n}$, so the event-level coefficient is
at most $\max(\pi_{0},\pi_{1})\,|1-a-b|^{n}\le|1-a-b|^{n}$. If $a+b=1$, the
chain is independent at positive lags and Proposition~\ref{prop:geom}
applies with zero residual; otherwise, set $\rho=|1-a-b|\in(0,1)$, $C=1$,
and any fixed $L_{0}$ with $\rho^{L_{0}+1}\le a/(2(a+b))$, which gives a
positive exponent linear in $N$ with no logarithmic loss. The exact
stationary probability is $1-\frac{b}{a+b}(1-a)^{N-1}$ for comparison.

\medskip
\noindent\emph{Comparison with classical bounds.}
For finite $N$, the natural comparator is the Chung--Erd\H{o}s
second-moment bound
$\P(\bigcup_{k}A_{k})\ge S_{N}^{2}/\sum_{i,j}\P(A_{i}\cap A_{j})$
\cite{ChungErdos1952}, and its variance form
$1-\Var(Z_{N})/S_{N}^{2}$ with $Z_{N}=\sum_{k}\mathbf 1_{A_{k}}$, which
underlies the asymptotic criteria of Erd\H{o}s--R\'enyi
\cite{ErdosRenyi1959} and the weak-dependence criteria of
\cite{DedeckerMerlevedeRio2022}. These are often sharper when either the full
matrix of pairwise intersections or a sharp variance estimate is
available. The bounds \eqref{eq:phi-main} and \eqref{eq:alpha-main} are
complementary: they require only the marginals and a mixing coefficient
at a single chosen spacing, and the second-order bound
\eqref{eq:second} sits between the two, using local overlaps together
with a one-lag residual. They are therefore one-lag blocking tools, not
replacements for pairwise-information \cite{Frolov2012} or full-profile
variance methods. Classical second-moment and asymptotic refinements --
the Kochen--Stone theorem \cite{KochenStone1964} and the treatments in
Chandra's monograph \cite{Chandra2012} -- instead control limsup events
or finite unions through global pairwise information; the present
estimates differ in using only one-dimensional marginals and a single
selected-lag mixing coefficient.

\medskip
\noindent\emph{Concluding remarks.}
The results form a small finite-sample toolkit attached to a single
mixing coefficient value: a $\varphi$-bound with the residual inside the
exponent, an $\alpha$-bound with an additive residual, a second-order
refinement, and a matching sharpness statement, all reducing to the
$m$-dependent inequalities of \cite{Panraksa2026} at zero residual.

\section*{Acknowledgements}
The author thanks the anonymous referee for a careful reading and helpful
comments, and Pakawut Jiradilok for the suggestion on the optimality of the
$m$-dependent constant.


\end{document}